\title{Iterative reconstruction of signals on graph}
\author{Emanuele Brugnoli$^1$ \and Elena Toscano$^2$ \and Calogero Vetro$^2$}
\date{%
    $^1$Institute for Complex Systems (ISC), Rome 00185 Italy\\%
    $^2$University of Palermo, Palermo 90123 Italy\\[2ex]%
    \today
}
\newcommand\fs@norules{\def\@fs@cfont{\bfseries}\let\@fs@capt\floatc@ruled
  \def\@fs@pre{}%
  \def\@fs@post{}%
  \def\@fs@mid{\kern3pt}%
  \let\@fs@iftopcapt\iftrue}
\newtheorem{theorem}{Theorem}[section]
\newtheorem{proposition}{Proposition}[section]
\theoremstyle{definition}
\newtheorem{definition}{Definition}[section]
\begin{document}
\maketitle

\begin{abstract}
We propose an iterative algorithm to interpolate graph signals from only a partial set of samples. Our method is derived from the well known Papoulis-Gerchberg algorithm by considering the optimal value of a constant involved in the iteration step. Compared with existing graph signal reconstruction algorithms, the proposed method achieves similar or better performance both in terms of convergence rate and computational efficiency.
\end{abstract}

\section{Introduction}
\label{se:intro}
\subsection{Graph signal processing}
The ongoing wide availability of information and communication spreading on social, energy, transportation and sensor networks, among others, requires efficient approaches to process signals defined on irregular domains. To this aim, the emerging field of signal processing on graphs extends classical discrete signal processing to signals with graphs as underlying structure \cite{sand,sand2,CC}.

More formally, let $\mathcal{G=(V,E)}$ be an undirected graph with finite vertex set $\mathcal{V}=\{x_i\}_{i=1}^N$ and edge set $\mathcal{E}$.
For any $x\in\mathcal{V}$, we define the \textit{degree} of $x$ to be the number of edges connected to $x$ and denote it with $d_x$.\\
If one complex number is associated with each vertex, all these numbers are collectively referred as a \textit{graph signal}. Thus, a graph signal is a mapping $f\colon\mathcal{V}\to\mathbb{C},\,x_n\mapsto f(x_n)$ ($f(n)$, for short), and the complex $N$-dimensional space $\mathbb{C}^N$ represents the space of all $N$-dimensional graph signals.
\subsection{Related works and contributions}
The task of sampling and recovery smooth signals from partial observations is one of the most investigated topics in signal processing on graphs. Some works focus on the teorethical conditions for the exact reconstruction of bandlimited signals \cite{pesen,nara}, other works focus on techniques for an efficient sampling set selection \cite{anis,chen}, and several methods have been proposed to reconstruct bandlimited graph signals from sampled data \cite{chen2,marq,BB,SMLR,TBDL,DD,wang2,zhu}. A method derived from projection onto convex sets \cite{youl} and called \textit{Iterative Least Square Reconstruction} (ILSR) is presented in \cite{BB}. A \textit{frame-based} representation \cite{chri} of ILSR is given in \cite{DD} together with the definition of \textit{local sets}, that is a partition of the vertex set into disjoint neighbours of the sample vertices. Based on these concepts, the authors of \cite{DD} also proposed two algorithms, namely \textit{Iterative Weighting Reconstruction} (IWR) and \textit{Iterative Propagating Reconstruction} (IPR), that perfectly recover $\omega$-bandlimited signals for any $\omega$ less than or equal to a certain measure of the local sets. IWR and IPR are derived from the Adaptive Weights Method \cite{feic} and the Voronoi Method \cite{groc3}, respectively, and are shown to converge faster than ILSR. All the aforementioned algorithms are derived from the classical Papoulis-Gerchberg iterative scheme \cite{gerc,papou} with a unitary relaxation parameter involved in the iteration step. 

In this work we exploit the same scheme to interpolate bandlimited graph signals from only a partial set of samples, but we consider the optimal value of the relaxation parameter. Compared with the other methods, the proposed algorithm achieves similar or better performance both in terms of convergence rate and computational efficiency.

The paper is organized as follows. Section \ref{se:pre} covers some preliminaries of graph signal processing, matrix operators and general iterative schemes. In Section \ref{subse:papo}, the classical Papoulis-Gerchberg iteration is introduced together with a result that guarantees its convergence. In Section \ref{se:reconstruction}, an iterative reconstruction method O-PGIR is proposed and its convergence rate is analyzed. Section \ref{subse:exper} presents some numerical experiments. 

\section{Preliminaries}\label{se:pre}
\subsection{Graph Laplacian and bandlimited graph signals}
Let $D$ denote the degree matrix of a graph $\mathcal{G}$ which is the diagonal $N\times N$ matrix given by $D=\text{diag}(d_x)$, and let $A$ denote the adjacency matrix of $\mathcal{G}$, also $N\times N$, where
\[
A(i,j)=
\begin{cases}
1,\quad\text{if }(x_i,x_j)\in\mathcal{E},\\
0,\quad\text{otherwise}. 
\end{cases}
\]
Then the \textit{Laplacian} of $\mathcal{G}$ can be written as $L=D-A$. Matrix $L$ is called Laplacian to distinguish it from the \textit{normalized Laplacian} $\mathcal{L}=D^{-1/2}LD^{-1/2}$. We consider exclusively the normalized Laplacian matrix $\mathcal{L}$ because it is shown to produce superior classification results \cite{zhou} and, under the assumption $\mathcal{G}$ undirected without self loops, it is a symmetric positive semi-definite matrix. Therefore, it has nonnegative eigenvalues $\{\lambda_k\}_{k=1}^{N}$ always in the range $[0,2]$ with associated orthonormal real-valued eigenvectors $\{\varphi_k\}_{k=1}^{N}$. Let us also denote by $\varphi^{\ast}$ the conjugate transpose of a vector $\varphi$, by $\Phi$ the $N\times N$ orthogonal matrix whose $k$-th column is $\varphi_k$ and by $\sigma(\mathcal{L})$ the spectrum of $\mathcal{L}$.

The \textit{graph Fourier transform} (GFT) $\hat{f}$ of a signal $f\colon\mathcal{V}\to\mathbb{C}$ on $\mathcal{G}$ is only defined on values of $\sigma(\mathcal{L})$  as the expansion of $f$ in terms of $\{\varphi_k\}$, that is $\hat{f}=\Phi^{\ast}f$.
Similar with classical Fourier analysis, eigenvalues $\{\lambda_k\}_{k=1}^{N}$ represent frequencies of the graph, and $\hat{f}(\lambda_l)$ represents the frequency component corresponding to $\lambda_l$. Low-frequency (High-frequency) components are associated with smaller (larger) eigenvalues. The \textit{inverse graph Fourier transform} is then given by $f=\Phi\hat{f}$. The matrices $\Phi^{\ast}$ and $\Phi$ are called \textit{graph Fourier matrix} and \textit{inverse graph Fourier matrix}, respectively. The inverse graph Fourier transform guarantees a perfect reconstruction of $f$ from the knowledge of $\hat{f}(\lambda_l)$ for all the eigenvalues $\lambda_l$ of $\mathcal{L}$. However if we only reconstruct $f$ from values of $\hat{f}(\lambda_l)$ with low magnitudes, we obtain an approximation to $f$.

If supp$(\hat{f})\subseteq[0,\omega]$, $f$ is called \textit{$\omega$-bandlimited} and the subspace $PW_{\omega}(\mathcal{G})$ of $\omega$-bandlimited signals on $\mathcal{G}$ is called \textit{Paley-Wiener space}. Bandlimited signals are smooth, and the smoothness increases as the bandwidth decreases. Suppose that for a graph signal $f\in PW_{\omega}(\mathcal{G})$, only $\{f(u)\}_{u\in\mathcal{S}}$ on the sampling set $\mathcal{S}\subseteq\mathcal{V}$ are known. In \cite{pesen}, Pesenson showed that $f$ can be uniquely reconstructed from its entries $\{f(u)\}_{u\in\mathcal{S}}$ under certain conditions. To make a practical use of Pesenson's result, the authors of \cite{nara} presented the following result to compute the maximum $\omega$.
\begin{proposition}\label{pr:smallest_eigen}
Given a graph $\mathcal{G=(V,E)}$ with normalized Laplacian matrix $\mathcal{L}$, known sampling set $\mathcal{S}$ and unknown set $\mathcal{S}^{c}=\mathcal{V\setminus S}$, let $(\mathcal{L}^2)_{\mathcal{S}^{c}}$ be the submatrix of $\mathcal{L}^2$ containing only the rows and columns corresponding to $\mathcal{S}^{c}$. Then any signal $f\in PW_{\omega}(\mathcal{G})$ with $\omega\leq\sigma_{\min}$, where $\sigma_{\min}^2$ is the smallest eigenvalue of $(\mathcal{L}^2)_{\mathcal{S}^{c}}$, can be uniquely recovered from its samples on $\mathcal{S}$.
\end{proposition}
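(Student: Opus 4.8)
The plan is to recast ``$f$ can be uniquely recovered from its samples on $\mathcal{S}$'' as the statement that the map $f\mapsto\{f(u)\}_{u\in\mathcal{S}}$ is injective on the subspace $PW_{\omega}(\mathcal{G})$. Since $PW_{\omega}(\mathcal{G})$ is a linear subspace of $\mathbb{C}^N$, two signals $f,g\in PW_{\omega}(\mathcal{G})$ with $f(u)=g(u)$ for all $u\in\mathcal{S}$ differ by $h:=f-g\in PW_{\omega}(\mathcal{G})$ which vanishes on $\mathcal{S}$, i.e.\ $h$ is supported on $\mathcal{S}^{c}$. Hence it suffices to show that the only $h\in PW_{\omega}(\mathcal{G})$ supported on $\mathcal{S}^{c}$ is $h=0$, and I would do this by sandwiching the quadratic form $h^{\ast}\mathcal{L}^{2}h$ between two estimates: an upper bound coming from the bandlimiting hypothesis, and a lower bound coming from the support hypothesis.

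For the upper bound I would move to the spectral domain. Writing $\mathcal{L}=\Phi\Lambda\Phi^{\ast}$ with $\Lambda=\mathrm{diag}(\lambda_1,\dots,\lambda_N)$, one has $h^{\ast}\mathcal{L}^{2}h=\hat h^{\ast}\Lambda^{2}\hat h=\sum_{k}\lambda_k^{2}\,|\hat h(\lambda_k)|^{2}$; since $h\in PW_{\omega}(\mathcal{G})$ only the terms with $\lambda_k\le\omega$ survive, so $h^{\ast}\mathcal{L}^{2}h\le\omega^{2}\sum_{k}|\hat h(\lambda_k)|^{2}=\omega^{2}\|h\|^{2}$, the last equality being Parseval's identity for the orthogonal matrix $\Phi$. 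For the lower bound I would use that $h$ is supported on $\mathcal{S}^{c}$: the rows and columns indexed by $\mathcal{S}$ drop out of the quadratic form, so $h^{\ast}\mathcal{L}^{2}h=h_{\mathcal{S}^{c}}^{\ast}(\mathcal{L}^{2})_{\mathcal{S}^{c}}h_{\mathcal{S}^{c}}$, where $h_{\mathcal{S}^{c}}$ is the restriction of $h$ to $\mathcal{S}^{c}$ and $\|h_{\mathcal{S}^{c}}\|=\|h\|$. As a principal submatrix of the positive semidefinite matrix $\mathcal{L}^{2}$, $(\mathcal{L}^{2})_{\mathcal{S}^{c}}$ is symmetric positive semidefinite, so the Courant--Fischer (Rayleigh-quotient) characterization of its smallest eigenvalue $\sigma_{\min}^{2}$ gives $h_{\mathcal{S}^{c}}^{\ast}(\mathcal{L}^{2})_{\mathcal{S}^{c}}h_{\mathcal{S}^{c}}\ge\sigma_{\min}^{2}\|h\|^{2}$. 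Combining the two estimates yields $\sigma_{\min}^{2}\|h\|^{2}\le h^{\ast}\mathcal{L}^{2}h\le\omega^{2}\|h\|^{2}$.

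From here, whenever $\omega<\sigma_{\min}$ the chain forces $\|h\|=0$, hence $h=0$, which is exactly what is needed; this part is immediate once the two bounds are set up, so the real content of the proof is the bookkeeping above. The step I expect to require the most care is the borderline case $\omega=\sigma_{\min}$: there both inequalities must hold as equalities, which forces $\hat h$ to be supported on the single eigenvalue $\omega$ (equivalently $\mathcal{L}h=\omega h$) and simultaneously forces $h_{\mathcal{S}^{c}}$ to be a minimizer of the Rayleigh quotient of $(\mathcal{L}^{2})_{\mathcal{S}^{c}}$; one then has to argue that no eigenvector of $\mathcal{L}$ associated with $\omega$ can be supported entirely on $\mathcal{S}^{c}$. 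I would isolate this equality analysis as the crux, proving the strict case first and then dealing with $\omega=\sigma_{\min}$ separately (reading the hypothesis, if necessary, as the strict bound $\omega<\sigma_{\min}$ when $\sigma_{\min}\in\sigma(\mathcal{L})$).
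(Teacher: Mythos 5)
The paper itself offers no proof of this proposition: it is imported verbatim from reference [5] (Narang et al., building on Pesenson), so there is no in-paper argument to compare against. Your proposal is exactly the standard proof of this uniqueness claim. Reducing unique recovery to the injectivity of sampling on the subspace $PW_{\omega}(\mathcal{G})$, i.e.\ to showing that the only $h\in PW_{\omega}(\mathcal{G})$ vanishing on $\mathcal{S}$ is $h=0$, and then sandwiching $h^{\ast}\mathcal{L}^{2}h$ between $\omega^{2}\lVert h\rVert^{2}$ (spectral decomposition plus Parseval, using that $\Phi$ is orthogonal) and $\sigma_{\min}^{2}\lVert h\rVert^{2}$ (Rayleigh quotient of the principal submatrix $(\mathcal{L}^{2})_{\mathcal{S}^{c}}$, using $\lVert h_{\mathcal{S}^{c}}\rVert=\lVert h\rVert$) is correct in every step and settles the case $\omega<\sigma_{\min}$ completely.

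Your caution about the borderline case $\omega=\sigma_{\min}$ is justified, and in fact that case cannot be repaired in general: the statement with ``$\leq$'' is false as written. Take $\mathcal{G}$ to be the $4$-cycle with vertices $1,2,3,4$ and $\mathcal{S}=\{2,4\}$, so $\mathcal{S}^{c}=\{1,3\}$. Here $\mathcal{L}=I-A/2$, and $h=(1,0,-1,0)^{T}$ satisfies $Ah=0$, hence $\mathcal{L}h=h$, so $h\in PW_{1}(\mathcal{G})$ and $h$ vanishes on $\mathcal{S}$. On the other hand $\mathcal{L}^{2}=I-A+A^{2}/4$ gives $(\mathcal{L}^{2})_{\mathcal{S}^{c}}=\left(\begin{smallmatrix}3/2 & 1/2\\ 1/2 & 3/2\end{smallmatrix}\right)$, whose eigenvalues are $1$ and $2$, so $\sigma_{\min}=1=\omega$. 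Thus $0$ and $h$ are two distinct signals of $PW_{\sigma_{\min}}(\mathcal{G})$ with identical samples on $\mathcal{S}$, and unique recovery fails. Your equality analysis identifies precisely the obstruction (an eigenvector of $\mathcal{L}$ with eigenvalue exactly $\omega=\sigma_{\min}$ supported on $\mathcal{S}^{c}$), and this example shows the obstruction can actually occur; so the correct resolution is the one you propose as a fallback, namely reading the hypothesis as the strict bound $\omega<\sigma_{\min}$ (equivalently, adding the assumption that no $\sigma_{\min}$-eigenvector of $\mathcal{L}$ is supported on $\mathcal{S}^{c}$). With that reading your proof is complete; the defect lies in the quoted statement, not in your argument.
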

Discrete signals are processed by \textit{filters}, i. e., systems that take a signal as input and produce another signal as output.
We represent filtering on a graph using matrix-vector multiplication. In particular, we will consider the linear operations called sampling and bandlimiting, respectively, and defined on $\mathbb{C}^N$ as follows. The \textit{sampling} filter maps a signal into another by setting to zero a certain subset of its samples. This corresponds to multiplication by a binary diagonal matrix $S$ called \textit{sampling matrix}. The \textit{density} of a sampling set is $s/N$, $s$ being the number of nonzero entries in the sampling set.\\
The \textit{bandlimiting} filter onto $PW_{\omega}(\mathcal{G})$ is characterized by an idempotent matrix $P$ of the form $P=\Phi S\Phi^{\ast}$, where $S$ is a sampling matrix other than the identity $I$.
Signals that satisfy $f=Pf$ are low-pass signals and $P$ is said \textit{low-pass filter matrix}.

\subsection{Matrix properties and concepts}
Hereafter, we will consider the space of all $N$-dimensional graph signals $\mathbb{C}^N$ endowed with the metric induced by the usual norm $\left\lVert\cdot\right\rVert$.
\begin{definition}
Let $\rho(A)$ be the \textit{spectral radius} of an arbitrary matrix $A$, that is, the greatest of its eigenvalues in absolute value. The \textit{spectral norm} of $A$ is given by
\begin{equation}\label{eq:spe_nor}
\left\lVert A\right\rVert=\displaystyle\sup_{\left\lVert v\right\rVert=1}\left\lVert Av\right\rVert=\sqrt{\rho(A^{\ast}A)}.
\end{equation}
\end{definition}
\begin{definition}\label{de:nonexp}
A matrix $A$ defined on $\mathbb{C}^N$ is \textit{nonexpansive} if $\left\lVert Au-Av\right\rVert\leq \left\lVert u-v\right\rVert$ for all $u,v\in\mathbb{C}^N$, and \textit{strictly nonexpansive} if equality holds only for $u=v$.
\end{definition}
Nonexpansiveness of $A$ means $\left\lVert Av\right\rVert\leq\left\lVert v\right\rVert$ for any $v\in\mathbb{C}^N$. Thus, the spectral norm of a nonexpansive matrix $A$ cannot exceed unity. It follows from $\rho(A)\leq\left\lVert A\right\rVert$ that all the eigenvalues of $A$ do not exceed unity (in absolute value). Therefore, a strictly nonexpansive matrix $A$ is convergent to zero \cite{lim}.
The point $v\in\mathbb{C}^N$ is a \textit{fixed point} of $A$ if $Av=v$.

Consider now the general linear stationary iterative algorithm of first order 
\begin{equation}
v^{(i+1)}=Av^{(i)}+b,
\end{equation}
where $A$ is a square matrix defined on $\mathbb{C}^N$ and $b\in\mathbb{C}^N$. Assuming $A$ convergent to zero, that is $\rho(A)<1$, the error vector $e^{(k)}=v^{(k+1)}-v$ at iteration $k$ is given by $e^{(k)}=Ae^{(k-1)}$ and it is inductively related to the initial error $e^{(0)}$ by $e^{(k)}=A^ke^{(0)}$. Thus
\begin{equation}
\lVert e^{(k)}\rVert\leq\lVert A^k\rVert\lVert e^{(0)}\rVert.
\end{equation}
Since $e^{(0)}$ is unknown in practical problems, $\lVert A^k\rVert$ must be studied for comparing different iterative algorithms. The simplest measure of rapidity of convergence of $A$ is the \textit{rate of convergence} $R_{\infty}(A)$ defined as follows \cite{varg}.
\begin{definition}
Let $A$ be a $N\times N$ complex matrix. If $\lVert A^k\rVert<1$ for some integer $k>0$, then $R(A^k)=\frac{-\ln\lVert A^k\rVert}{k}$ is the average rate of convergence for $k$ iterations of $A$.
\end{definition}
\begin{theorem}\label{th:conv_rate}
If $A$ is a convergent $N\times N$ complex matrix, then $R(A^k)$ satisfies
\begin{equation}
R_{\infty}(A)=\lim_{k\to\infty}R(A^k)=-\ln\rho(A)
\end{equation}
and $R_{\infty}(A)$ represents the rate of convergence for $A$.
\end{theorem}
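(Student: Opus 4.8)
The plan is to reduce the statement to \emph{Gelfand's spectral radius formula}. Observe first that, by definition,
\[
R(A^k)=\frac{-\ln\lVert A^k\rVert}{k}=-\ln\bigl(\lVert A^k\rVert^{1/k}\bigr),
\]
so, since $t\mapsto-\ln t$ is continuous on $(0,\infty)$, the existence and value of $\lim_{k\to\infty}R(A^k)$ follow at once from the existence and value of $\lim_{k\to\infty}\lVert A^k\rVert^{1/k}$. Thus the whole theorem rests on showing that $\lim_{k\to\infty}\lVert A^k\rVert^{1/k}=\rho(A)$.

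For the lower bound I would use that $\rho(A)^k=\rho(A^k)\le\lVert A^k\rVert$ (every eigenvalue of $A^k$ is the $k$-th power of an eigenvalue of $A$, and $\rho(\cdot)\le\lVert\cdot\rVert$), whence $\rho(A)\le\lVert A^k\rVert^{1/k}$ for all $k$. For the upper bound, fix $\varepsilon>0$ and set $A_\varepsilon=A/(\rho(A)+\varepsilon)$, which has spectral radius $\rho(A)/(\rho(A)+\varepsilon)<1$ and is therefore convergent to zero; hence $\lVert A_\varepsilon^k\rVert\le 1$ for all $k$ large enough, i.e.\ $\lVert A^k\rVert^{1/k}\le\rho(A)+\varepsilon$ eventually. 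Combining the two bounds and letting $\varepsilon\to0$ gives $\lim_{k\to\infty}\lVert A^k\rVert^{1/k}=\rho(A)$, hence $R_\infty(A)=-\ln\rho(A)$. (Alternatively, passing to the Jordan canonical form of $A$ one sees directly that $\lVert A^k\rVert$ behaves like $k^{m-1}\rho(A)^k$, where $m$ is the size of the largest Jordan block attached to an eigenvalue of modulus $\rho(A)$, and $(k^{m-1})^{1/k}\to 1$. The degenerate case $\rho(A)=0$, in which $A$ is nilpotent and $A^k=0$ for $k\ge N$, is consistent with the convention $-\ln 0=+\infty$.)

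Finally, for the assertion that $R_\infty(A)$ \emph{represents} the rate of convergence: from $\lVert e^{(k)}\rVert\le\lVert A^k\rVert\,\lVert e^{(0)}\rVert$ and $\lVert A^k\rVert=e^{-kR(A^k)}$ with $R(A^k)\to R_\infty(A)$, the guaranteed error bound decays asymptotically like $e^{-kR_\infty(A)}$, so $R_\infty(A)$ is the exponential decay rate common to all iterations governed by $A$, and a larger $R_\infty(A)$ means faster convergence — which is exactly why it serves as the comparison quantity for different iterative schemes. The main obstacle is the upper bound in Gelfand's formula; everything else is continuity of the logarithm and elementary spectral facts, and the rescaling argument (or, equivalently, the Jordan-form estimate) is the one genuinely non-trivial input, relying on the already-noted fact that $\rho(A)<1$ forces $A^k\to 0$.
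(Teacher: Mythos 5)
Your proof is correct. Note that the paper itself gives no proof of this statement: it is quoted from Varga's \emph{Matrix Iterative Analysis} (reference [varg] in the bibliography), so there is no in-paper argument to compare against. Your reduction to Gelfand's formula $\lim_k\lVert A^k\rVert^{1/k}=\rho(A)$, with the lower bound from $\rho(A)^k=\rho(A^k)\le\lVert A^k\rVert$ and the upper bound from rescaling by $\rho(A)+\varepsilon$, is the standard route and is sound; the only external input is the fact that $\rho(B)<1$ forces $B^k\to 0$, which the paper already takes for granted (citing [lim]), and which you correctly flag --- your parenthetical Jordan-form estimate $\lVert A^k\rVert\sim k^{m-1}\rho(A)^k$ is in fact essentially Varga's own argument, so the two routes you offer bracket both the cited proof and the slicker rescaling version. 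Your closing remark tying $R_\infty(A)$ to the asymptotic decay of the error bound $\lVert e^{(k)}\rVert\le\lVert A^k\rVert\,\lVert e^{(0)}\rVert$ matches the role the paper assigns to the rate of convergence, and the nilpotent case $\rho(A)=0$ is handled consistently with the definition's requirement $\lVert A^k\rVert<1$.
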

Therefore, the lower is the spectral radius of $A$, the faster the convergence of $A$.

\section{Papoulis-Gerchberg Iterative Reconstruction}\label{subse:papo}

The \textit{Papoulis-Gerchberg Iterative Reconstruction} (PGIR) \cite{gerc,papou} has been extensively used to solve the missing data problem in bandlimited signals \cite{AA}. Each iteration of PGIR consists of two steps: the low-pass filtering step
\begin{equation}\label{eq:P}
p^{(k)}=Pf^{(k)}
\end{equation}
which imposes the frequency domain constraints about the data, followed by the resampling step which restores the known data
\begin{equation}\label{eq:T}
f^{(k+1)}=Tp^{(k)}.
\end{equation}
The resampling operator is defined by $T(\cdot)=\mu f^{(1)}+(I-\mu S)(\cdot)$, where $S$ is a sampling matrix and $\mu$ is a fixed constant called \textit{relaxation parameter}. The original signal satisfies $f=Pf$ and the observed signal is $f^{(1)}=Sf$. Combining (\ref{eq:P}) and (\ref{eq:T}) we have
\begin{equation}\label{eq:papoulis}
f^{(k+1)}=\mu Sf+(I-\mu S)Pf^{(k)}=T^{\prime}f^{(k)}
\end{equation}
where the operator
\begin{equation}\label{eq:op_one}
T^{\prime}(\cdot)=\mu Sf+(I-\mu S)P(\cdot)
\end{equation}
is nonexpansive for $0\leq\mu\leq 2$. Indeed, by letting
\begin{equation}\label{eq:a_mu}
A_{\mu}=(I-\mu S)P
\end{equation}
it follows from $(\ref{eq:op_one})$ that $T^{\prime}$ is nonexpansive if and only if $A_{\mu}$ is. Since $P$ is obviously nonexpansive, the nonexpansiveness of $A_{\mu}$ depends only on $I-\mu S$, which in turn is nonexpansive if and only if $0\leq\mu\leq 2$.

The following result \cite{ferre} guarantees the strict nonexpansiveness of $T^{\prime}$ and then the convergence of (\ref{eq:papoulis}) if suitable conditions are imposed upon $S, P$ and $\mu$.
\begin{theorem}\label{te:only_one_sol}
Let $S$ be a sampling matrix with density $d$, and let $P$ be a $w$-bandlimiting matrix. If
\begin{equation}\label{eq:muu}
0<\mu<2\quad\text{and}\quad d\geq\omega,
\end{equation}
then $T^{\prime}$ is strictly nonexpansive and the iteration (\ref{eq:papoulis}) converges to the unique fixed point of $T^{\prime}$ for arbitrary $f^{(1)}$.
\end{theorem}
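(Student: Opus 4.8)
The plan is to reduce the claim to a property of the linear operator $A_{\mu}=(I-\mu S)P$ from \eqref{eq:a_mu}, to rule out any norm‑preserving direction of $A_{\mu}$, and then to feed $\rho(A_{\mu})<1$ into the first‑order iteration analysis of Section~\ref{se:pre}. The first step is purely formal: since $T^{\prime}$ in \eqref{eq:op_one} is affine with linear part $A_{\mu}$, we have $T^{\prime}u-T^{\prime}v=A_{\mu}(u-v)$ for all $u,v\in\mathbb{C}^N$, so by Definition~\ref{de:nonexp} the map $T^{\prime}$ is strictly nonexpansive exactly when $\left\lVert A_{\mu}w\right\rVert<\left\lVert w\right\rVert$ for every nonzero $w$; equivalently, by compactness of the unit sphere of $\mathbb{C}^N$, when $\left\lVert A_{\mu}\right\rVert<1$. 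Thus everything comes down to showing that $\left\lVert A_{\mu}w\right\rVert=\left\lVert w\right\rVert$ forces $w=0$.

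For that I would chase the factorization through the two nonexpansiveness facts already recorded after \eqref{eq:a_mu}: for any $w\in\mathbb{C}^N$,
\begin{equation*}
\left\lVert w\right\rVert=\left\lVert A_{\mu}w\right\rVert=\left\lVert(I-\mu S)Pw\right\rVert\leq\left\lVert Pw\right\rVert\leq\left\lVert w\right\rVert,
\end{equation*}
the first inequality because $I-\mu S$ is nonexpansive (here $0<\mu<2$) and the second because $P$ is. Hence both are equalities. Now $P$ has the form $\Phi S_{0}\Phi^{\ast}$ with $S_{0}$ a diagonal $0/1$ matrix and $\Phi$ orthogonal, so $P$ is a symmetric idempotent, i.e.\ the orthogonal projection onto $PW_{\omega}(\mathcal{G})$; combining $\left\lVert Pw\right\rVert=\left\lVert w\right\rVert$ with $\left\lVert w\right\rVert^{2}=\left\lVert Pw\right\rVert^{2}+\left\lVert(I-P)w\right\rVert^{2}$ gives $Pw=w$, i.e.\ $w\in PW_{\omega}(\mathcal{G})$. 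Then $\left\lVert(I-\mu S)Pw\right\rVert=\left\lVert Pw\right\rVert$ reads $\left\lVert(I-\mu S)w\right\rVert=\left\lVert w\right\rVert$; since $I-\mu S$ is diagonal with entry $1-\mu$ at each sampled vertex and $1$ elsewhere, and $|1-\mu|<1$, expanding this norm coordinatewise forces $w$ to vanish on the whole sampling set $\mathcal{S}$, that is $Sw=0$.

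The remaining implication — that an $\omega$‑bandlimited signal $w$ vanishing on $\mathcal{S}$ must be zero — is where the hypothesis $d\geq\omega$ enters, and I expect this to be the only genuine obstacle. It is exactly the statement that $\mathcal{S}$ is a uniqueness set for $PW_{\omega}(\mathcal{G})$, i.e.\ that $PW_{\omega}(\mathcal{G})$ contains no nonzero signal supported on the unsampled vertices $\mathcal{S}^{c}$; under the density condition $d\geq\omega$ this is supplied by the sampling‑theoretic argument of \cite{ferre} (compare the uniqueness criterion of Proposition~\ref{pr:smallest_eigen}). Granting it, $w=0$, so $\left\lVert A_{\mu}\right\rVert<1$ and $T^{\prime}$ is strictly nonexpansive; then by the remark following Definition~\ref{de:nonexp} (see \cite{lim}) $A_{\mu}$ is convergent to zero, hence $\rho(A_{\mu})<1$.

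Finally I would assemble the convergence statement. The original signal $f$ is a fixed point of $T^{\prime}$: since $f=Pf$, one gets $T^{\prime}f=\mu Sf+(I-\mu S)Pf=\mu Sf+(I-\mu S)f=f$. Uniqueness of the fixed point follows from $\rho(A_{\mu})<1$, because $T^{\prime}v=v$ and $T^{\prime}v^{\prime}=v^{\prime}$ give $v-v^{\prime}=A_{\mu}(v-v^{\prime})$ and hence $v=v^{\prime}$. For the iteration \eqref{eq:papoulis}, set $e^{(k)}=f^{(k)}-f$; then $e^{(k+1)}=T^{\prime}f^{(k)}-T^{\prime}f=A_{\mu}e^{(k)}$, so $e^{(k)}=A_{\mu}^{\,k-1}e^{(1)}\to 0$ for arbitrary $f^{(1)}$ since $\rho(A_{\mu})<1$ (cf.\ Theorem~\ref{th:conv_rate}), which is precisely the convergence of $f^{(k)}$ to the unique fixed point of $T^{\prime}$. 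Alternatively, since $\left\lVert A_{\mu}\right\rVert<1$, the affine map $T^{\prime}$ is a contraction and Banach's fixed point theorem yields the same conclusion in one stroke.
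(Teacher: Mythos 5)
A preliminary remark on the comparison: the paper does not prove this theorem at all --- it is imported from \cite{ferre} with a bare citation --- so your write-up is already more explicit than the source text. The reduction you carry out is the standard one and is done correctly: strict nonexpansiveness of the affine map $T^{\prime}$ is equivalent to $\lVert A_{\mu}w\rVert<\lVert w\rVert$ for all $w\neq 0$; the equality-case analysis through the factorization $(I-\mu S)P$, using that $P=\Phi S_{0}\Phi^{\ast}$ is an orthogonal projection and that $I-\mu S$ is diagonal with entries $1-\mu$ (with $|1-\mu|<1$) on the sampled vertices, correctly forces $Pw=w$ and $Sw=0$ for any norm-preserving $w$; and once $\lVert A_{\mu}\rVert<1$ is in hand, the fixed-point identification $T^{\prime}f=f$, uniqueness, and convergence for arbitrary initialization follow exactly as you say (Banach, or the error recursion $e^{(k+1)}=A_{\mu}e^{(k)}$ together with Theorem \ref{th:conv_rate}).

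The genuine gap is the step you ``grant'': that a nonzero $w\in PW_{\omega}(\mathcal{G})$ cannot vanish on a sampling set of density $d\geq\omega$. This is not a routine detail to delegate --- it is the entire nontrivial content of the theorem --- and, read as a statement about graph signals, it does not follow from $d\geq\omega$ alone. The density $d=s/N$ is a vertex-domain fraction while $\omega$ is a spectral cutoff in $[0,2]$, and for a general orthogonal $\Phi$ nothing prevents an $\omega$-bandlimited signal from vanishing identically on the sampled vertices even when $s$ exceeds the dimension of the band (take, e.g., a disconnected graph, a signal supported on one component, and all samples drawn from the other). The argument in \cite{ferre} succeeds because in the classical discrete (DFT) setting the square submatrices of the Fourier matrix indexed by consecutive frequency rows are nonsingular (a Vandermonde-type fact), which is precisely what excludes a bandlimited signal supported on the unsampled positions once the number of known samples is at least the band dimension. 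On a graph the corresponding uniqueness has to be imposed as an additional hypothesis --- which is exactly why the proposition in Section \ref{se:reconstruction} adds the condition $\omega\leq\sigma_{\min}$ from Proposition \ref{pr:smallest_eigen} on top of (\ref{eq:muu}). So either restrict your proof to the classical setting of \cite{ferre} and prove the relevant minor-nonsingularity there, or state explicitly that $\mathcal{S}$ is assumed to be a uniqueness set for $PW_{\omega}(\mathcal{G})$; as written, ``granting it'' conceals the only step in which the hypothesis $d\geq\omega$ is actually used, and the only step that can fail.
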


\section{The proposed algorithm}\label{se:reconstruction}

Theorem \ref{te:only_one_sol} provides an iterative method for the reconstruction of bandlimited graph signals from an appropriate sampling set.
\begin{proposition}
Given a graph $\mathcal{G=(V,E)}$, let $S$ be the sampling matrix associated with a sampling set $\mathcal{S}$ of density $d$, and let $P$ be the low-pass filter matrix onto $PW_{\omega}(\mathcal{G})$ with $\omega\leq\sigma_{\min}$ as in Proposition \ref{pr:smallest_eigen}. Then, under the assumptions (\ref{eq:muu}), any $f\in PW_{\omega}(\mathcal{G})$ can be uniquely recovered from its entries $\{f(u)\}_{u\in\mathcal{S}}$ through the iteration (\ref{eq:papoulis}). Moreover, by recalling (\ref{eq:a_mu}), the best convergence rate of (\ref{eq:papoulis}) is obtained when
\begin{equation}\label{eq:err_bou}
\mu=\mu_{\text{opt}}=\frac{2}{2-\rho(A_1)}.
\end{equation}
\end{proposition}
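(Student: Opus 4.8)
The plan is to establish the two assertions separately: that the iteration (\ref{eq:papoulis}) reconstructs $f$, and that (\ref{eq:err_bou}) is the optimal relaxation parameter. For the reconstruction, the first point is that $f$ is itself the fixed point of the operator $T^{\prime}$ in (\ref{eq:op_one}): since $f\in PW_{\omega}(\mathcal{G})$ satisfies $f=Pf$ and the observed data is $f^{(1)}=Sf$, one computes $T^{\prime}f=\mu Sf+(I-\mu S)Pf=\mu Sf+(I-\mu S)f=f$. Under the standing assumptions (\ref{eq:muu}) — which hold because $0<\mu<2$ is imposed and $d\ge\omega$ — Theorem \ref{te:only_one_sol} guarantees that $T^{\prime}$ is strictly nonexpansive, so (\ref{eq:papoulis}) converges from any initial signal to the unique fixed point of $T^{\prime}$, namely $f$. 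The bound $\omega\le\sigma_{\min}$ from Proposition \ref{pr:smallest_eigen} additionally ensures that $f$ is the only $\omega$-bandlimited signal consistent with $\{f(u)\}_{u\in\mathcal{S}}$, so the limit is unambiguous.

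For the optimal parameter, by Theorem \ref{th:conv_rate} the rate of convergence of (\ref{eq:papoulis}) equals $-\ln\rho(A_{\mu})$, so the goal is to minimize $\rho(A_{\mu})$ over $0<\mu<2$, where $A_{\mu}=(I-\mu S)P$ as in (\ref{eq:a_mu}). The key step is to describe $\sigma(A_{\mu})$. Since $A_{\mu}=(I-\mu S)P$ and $P(I-\mu S)$ are $N\times N$ matrices with the same characteristic polynomial, and since $P(I-\mu S)$ maps into $\mathrm{range}(P)$ and acts there as $I-\mu C$ with $C:=PSP|_{\mathrm{range}(P)}$ symmetric and positive semidefinite, every eigenvalue of $A_{\mu}$ is real and of the form $1-\mu c$ with $c\in\sigma(C)$, together with $0$ (with multiplicity $\dim\mathrm{range}(P)^{\perp}$); in particular the a priori non-normal matrix $A_{\mu}$ turns out to have real spectrum. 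As $\|C\|=\|PSP\|\le\|P\|\,\|S\|\,\|P\|=1$ and strict nonexpansiveness rules out the eigenvalue $1$ of $A_{\mu}$, one gets $\sigma(C)\subseteq(0,1]$, and evaluating at $\mu=1$ yields $\rho(A_{1})=\max_{c\in\sigma(C)}|1-c|=1-\min\sigma(C)$, hence $\sigma(C)\subseteq[\,1-\rho(A_{1}),\,1\,]$ and
\[
\rho(A_{\mu})=\max_{c\in\sigma(C)}|1-\mu c|\ \le\ \max\bigl(|1-\mu|,\ |1-\mu(1-\rho(A_{1}))|\bigr)=:h(\mu).
\]

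It remains to minimize $h$ over $\mu>0$. This is the standard balancing argument: $|1-\mu(1-\rho(A_{1}))|$ is decreasing while $|1-\mu|$ is eventually increasing, and the two are equal exactly when $\mu-1=1-\mu(1-\rho(A_{1}))$, i.e. $\mu=\mu_{\mathrm{opt}}=\tfrac{2}{2-\rho(A_{1})}$, where $h$ attains its minimum $\tfrac{\rho(A_{1})}{2-\rho(A_{1})}$; note $\mu_{\mathrm{opt}}\in[1,2)$ because $\rho(A_{1})\in[0,1)$, so it is an admissible relaxation parameter. Using the matching lower bound $\rho(A_{\mu})\ge 1-\mu(1-\rho(A_{1}))$ — valid since $1-\rho(A_{1})=\min\sigma(C)\in\sigma(C)$ — one verifies that for every $\mu<\mu_{\mathrm{opt}}$ already $\rho(A_{\mu})>h(\mu_{\mathrm{opt}})\ge\rho(A_{\mu_{\mathrm{opt}}})$, so no smaller parameter can do better.

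The point I expect to be the main obstacle is the regime $\mu>\mu_{\mathrm{opt}}$: bounding $\rho(A_{\mu})$ from below there requires the exact value $\max\sigma(C)=\|PSP\|$, which is not determined by $\rho(A_{1})$ alone, so one must argue with the worst admissible value $\max\sigma(C)=1$. In other words, $\mu_{\mathrm{opt}}$ is the parameter that minimizes the guaranteed spectral radius $h(\mu)$, and this is the sense in which the choice $\mu=\mu_{\mathrm{opt}}$ provides the best convergence rate; making this rigorous amounts to either accepting the spectral-localization bound $\sigma(C)\subseteq[\,1-\rho(A_{1}),1\,]$ as the available information, or supplying an extra argument (based on the geometry of $\mathrm{range}(P)$ and the sampled coordinates) that pins down $\|PSP\|$.
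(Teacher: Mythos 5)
Your proposal follows essentially the same route as the paper's proof: the first claim via Proposition \ref{pr:smallest_eigen} and Theorem \ref{te:only_one_sol}, and the optimal parameter via the reduction to $T_\mu=P(I-\mu S)P$, the identification of the spectrum of $A_\mu$ with $\{1-\mu c:\ c\in\sigma(C)\}\cup\{0\}$ where $C=PSP|_{\mathrm{range}(P)}$, and the balancing of $|1-\mu|$ against $|1-\mu(1-\rho(A_1))|$, which yields exactly (\ref{eq:err_bou}). The one genuine difference lies in the step you flag as the main obstacle. The paper disposes of it by asserting that the smallest eigenvalue of $A_1$ is zero and then treating $\rho(A_{\mu})=\max\{\mu-1,\,1-\mu(1-\rho(A_1))\}$ as an exact identity, i.e.\ it takes $\max\sigma(C)=1$ for granted; you instead prove only the upper bound $\rho(A_\mu)\le h(\mu)$, show rigorously that $\mu_{\mathrm{opt}}$ beats every $\mu<\mu_{\mathrm{opt}}$ (hence in particular ILSR's $\mu=1$), and interpret (\ref{eq:err_bou}) as minimizing the guaranteed rate. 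Your caution is well placed: $A_1$ is singular simply because $\ker P\neq\{0\}$, but those eigenvectors satisfy $A_\mu v=0$ for every $\mu$, so they never contribute the eigenvalue $1-\mu$; the branch $\mu-1$ actually appears in $\rho(A_\mu)$ only if $1\in\sigma(C)$, i.e.\ if some signal of $PW_\omega(\mathcal{G})$ is supported entirely on $\mathcal{S}$, which neither the hypotheses nor the paper's argument guarantee (otherwise the true minimizer is $2/(\min\sigma(C)+\max\sigma(C))$, which coincides with (\ref{eq:err_bou}) precisely when $\max\sigma(C)=1$). So your proposal proves slightly less than the paper asserts, but what it proves is sound, and it correctly isolates the unproved spectral claim on which the paper's exact-optimality statement rests.
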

\begin{proof}
The first statement follows from Proposition \ref{pr:smallest_eigen} and Theorem \ref{te:only_one_sol}.
From Theorem \ref{th:conv_rate}, it follows that the best convergence rate of (\ref{eq:papoulis}) is obtained for the value of $\mu$ which minimizes $\rho(A_{\mu})$. To this aim, consider $T_{\mu}=P(I-\mu S)P$ as in \cite{marv2}.
By left multiplying $A_{\mu}v=\lambda v$ by $P$, we have $T_{\mu}Pv=\lambda Pv$, and thus $\rho(A_{\mu})\leq\rho(T_{\mu})$.
Moreover, by left multiplying $T_{\mu}v=\lambda v$ by $A_{\mu}$, we obtain $A_{\mu}^2v=\lambda A_{\mu}v$. This shows that $\rho(T_{\mu})\leq\rho(A_{\mu})$. Combining the two results, it follows that $\rho(A_{\mu})=\rho(T_{\mu})$. Now, by left multiplying $T_1v=\lambda v$ by $P$, we have $Pv=v$ and also $P(I-S)Pv=v-PSPv=\lambda v$. Therefore, if $v$ is an eigenvector of $T_1$ referring to $\lambda$ then v is an eigenvector of $PSP$ referring to $1-\lambda$. This implies $P(I-\mu S)Pv=v-\mu PSPv=v-\mu(1-\lambda)v$, thus $v$ is also an eigenvector of $T_{\mu}$ pertaining to $1-\mu(1-\lambda)$. This shows that increasing $\mu$ towards $2$ reduces the spectral radius leading to better convergence rates, then we can assume $\mu>1$. Under the conditions of Theorem \ref{te:only_one_sol}, it is $\rho(A_1)<1$ and the smallest eigenvalue of $A_1$ is zero. Then the optimal value of $\mu$ is obtained by minimizing
\[
\rho(A_{\mu})=\max_{1<\mu<2}\{\mu-1,1-\mu(1-\rho(A_1))\}.
\]
Thus, (\ref{eq:err_bou}) follows by solving $\mu_{\text{opt}}-1=1-\mu_{\text{opt}}(1-\rho(A_1))$.
\end{proof}

The algorithm proposed is based on the iteration ($\ref{eq:papoulis}$) with $\mu=\mu_{\text{opt}}$. We call it \textit{Optimal Papoulis-Gerchberg Iterative Reconstruction} (O-PGIR). A pseudocode is displayed below, where $\sigma_{\min}$ is the maximal cutoff frequency defined as in Proposition \ref{pr:smallest_eigen}. 
\begin{algorithm}[H]
\small
 \caption{Optimal Papoulis-Gerchberg Iterative Reconstruction (O-PGIR)}
 \begin{algorithmic}[1]\label{al:O-PGIR}
 \renewcommand{\algorithmicrequire}{\textbf{Input:}}
 \renewcommand{\algorithmicensure}{\textbf{Output:}}
 \REQUIRE Graph $\mathcal{G}$, sampling set $\mathcal{S}$ with density $d$, sampled data $\{f(u)\}_{u\in\mathcal{S}}$, cutoff frequency $\omega\leq\min\{\sigma_{\min},d\}$;
 \ENSURE  Interpolated signal $f^{(k)}$;
 \\ \textit{Initialization}:
  \STATE $A_{1}=(I-S)P$;
  \STATE $\mu_{\text{opt}}=\frac{2}{2-\rho(A_1)}$;
  \STATE $A_{\mu_{\text{opt}}}=(I-\mu_{\text{opt}}S)P$;
  \STATE $f^{(1)}=\mu_{\text{opt}}Sf$;
 \\ \textit{Loop:}
  \STATE $f^{(k+1)}=f^{(1)}+A_{\mu_{\text{opt}}}f^{(k)}$;
 \\ \textit{Until:} The stop condition is satisfied.
 \end{algorithmic}
 \end{algorithm}

\section{Experimental results}\label{subse:exper}

\subsection{Performance analysis}
An Erd\"{o}s–R\'{e}nyi random graph $\mathcal{G}$ \cite{erdos} with $3000$ vertices and $12000$ edges and the Minnesota road graph $\mathcal{M}$ \cite{glei} which has $2640$ vertices and $6604$ edges, are chosen as  underlying sample structures to compare the performance of the proposed O-PGIR with the existing ILSR, IWR and IPR. For the sampling set, $35\%$ of vertices are selected uniformly at random among all the vertices. The bandlimited signal $f$ is generated by first considering a random signal and then removing its high-frequency components. The convergence curves of ILSR, IWR, and IPR and the proposed O-PGIR are illustrated in Fig. \ref{fi:it_number} and Fig. \ref{fi:exe_time} with respect to number of iterations and execution time, respectively, both on graph $\mathcal{G}$ (left panels) and graph $\mathcal{M}$ (right panels).
\begin{figure}[!htbp]
\centerline{\includegraphics[width=\columnwidth]{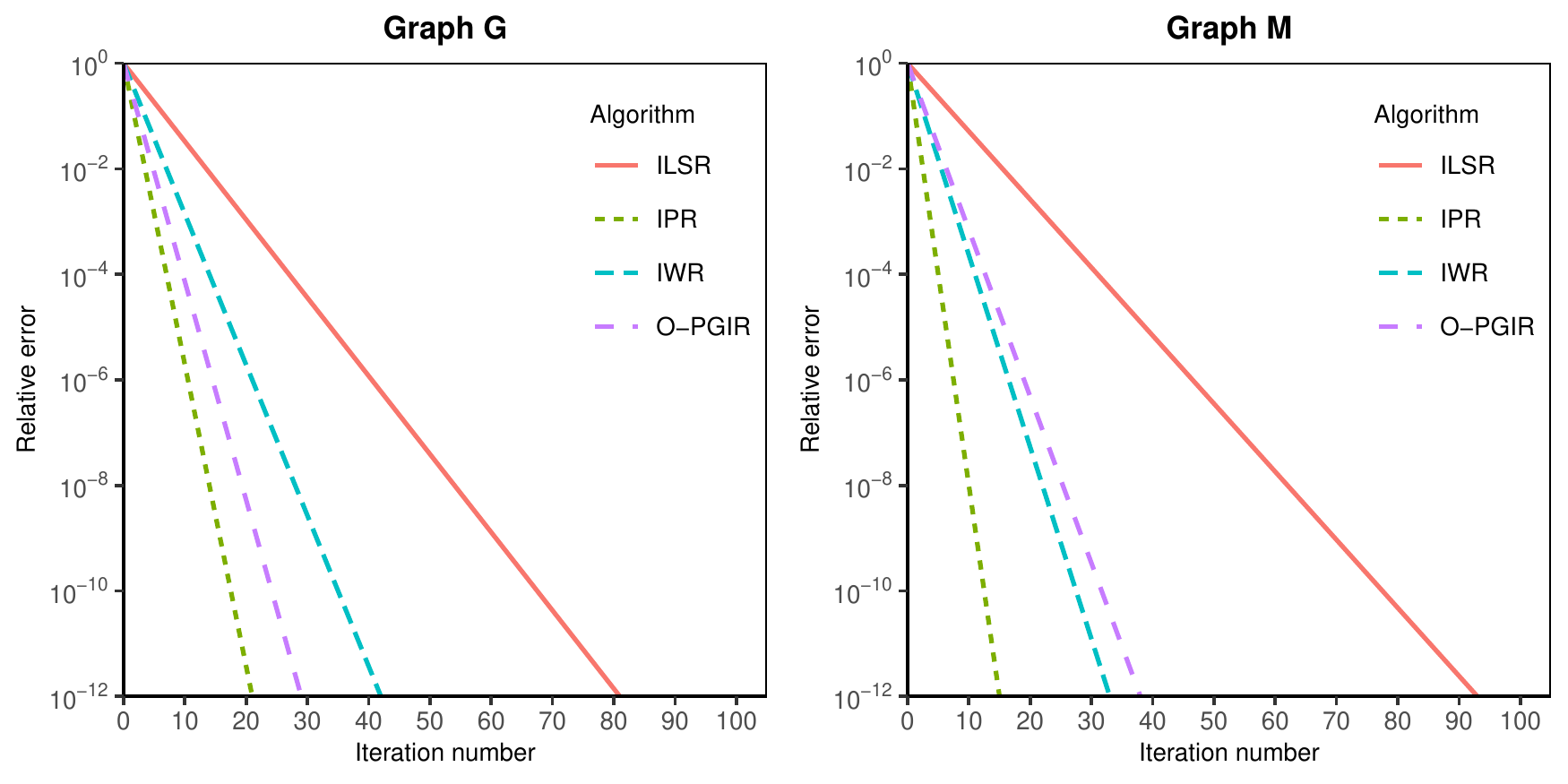}}
\caption{Left: graph $\mathcal{G}$. Right: graph $\mathcal{M}$. Comparison of the algorithms with respect to iteration number.}\label{fi:it_number}
\end{figure}

\begin{figure}[!htbp]
\centerline{\includegraphics[width=\columnwidth]{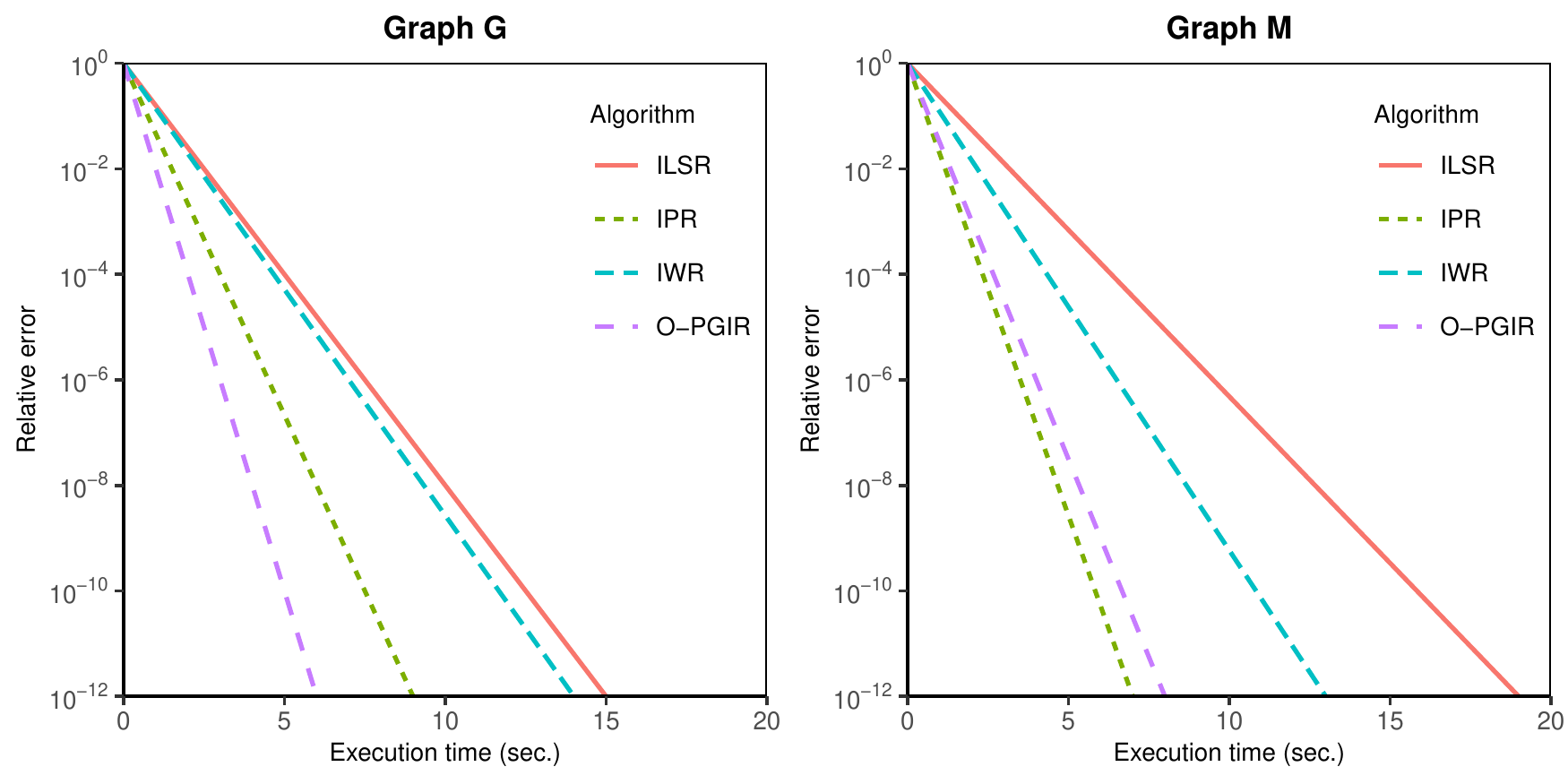}}
\caption{Left: graph $\mathcal{G}$. Right: graph $\mathcal{M}$. Comparison of the algorithms with respect to execution time.}\label{fi:exe_time}
\end{figure}
The simulation has been repeated for $100$ randomly generated signals and plots represent the mean values of all the simulation results. The code was implemented in MATLAB. Plots clearly show the importance of the relaxation parameter $\mu$ in the performance of convergence of (\ref{eq:papoulis}). Indeed, O-PGIR and ILSR, which basically differ only for the value of the relaxation parameter in (\ref{eq:papoulis}) ($\mu=\mu_{\text{opt}}$ and $\mu=1$, respectively), exhibit a very different convergence speed. Moreover, O-PGIR performs well even when compared with IWR and the reference algorithm IPR, specially with respect to the execution time. This is mainly due to the fact that, at each iteration, IWR and IPR require a propagating step that increases their computational complexity, whereas in O-PGIR (and ILSR) the most expensive, time-consuming operations are perfomed once outside the loop.

\subsection{Robustness against noise}
Here we want to investigate the robustness of O-PGIR in presence of noise. To this aim, we corrupt the observed signal with uncorrelated additive Gaussian noise and we compare the error perfomance of O-PGIR with the other algorithms. Fig. \ref{fi:noise} shows that the steady-state error decreases as the SNR increases, suggesting that all the methods have almost the same behavior against observation noise. Plots also shows that the interference of noise significantly limits the performance of all the methods. Indeed, the proposed method and the compared algorithms are all derived from PGIR which in turn usually runs under a noise-free condition. However, some recent studies on PGIR and its generalizations show how the use of some error tolerant techniques leads to reasonable results in the presence of noise too (see \cite{cho}).

\begin{figure}[!htbp]
\centerline{\includegraphics[width=\columnwidth]{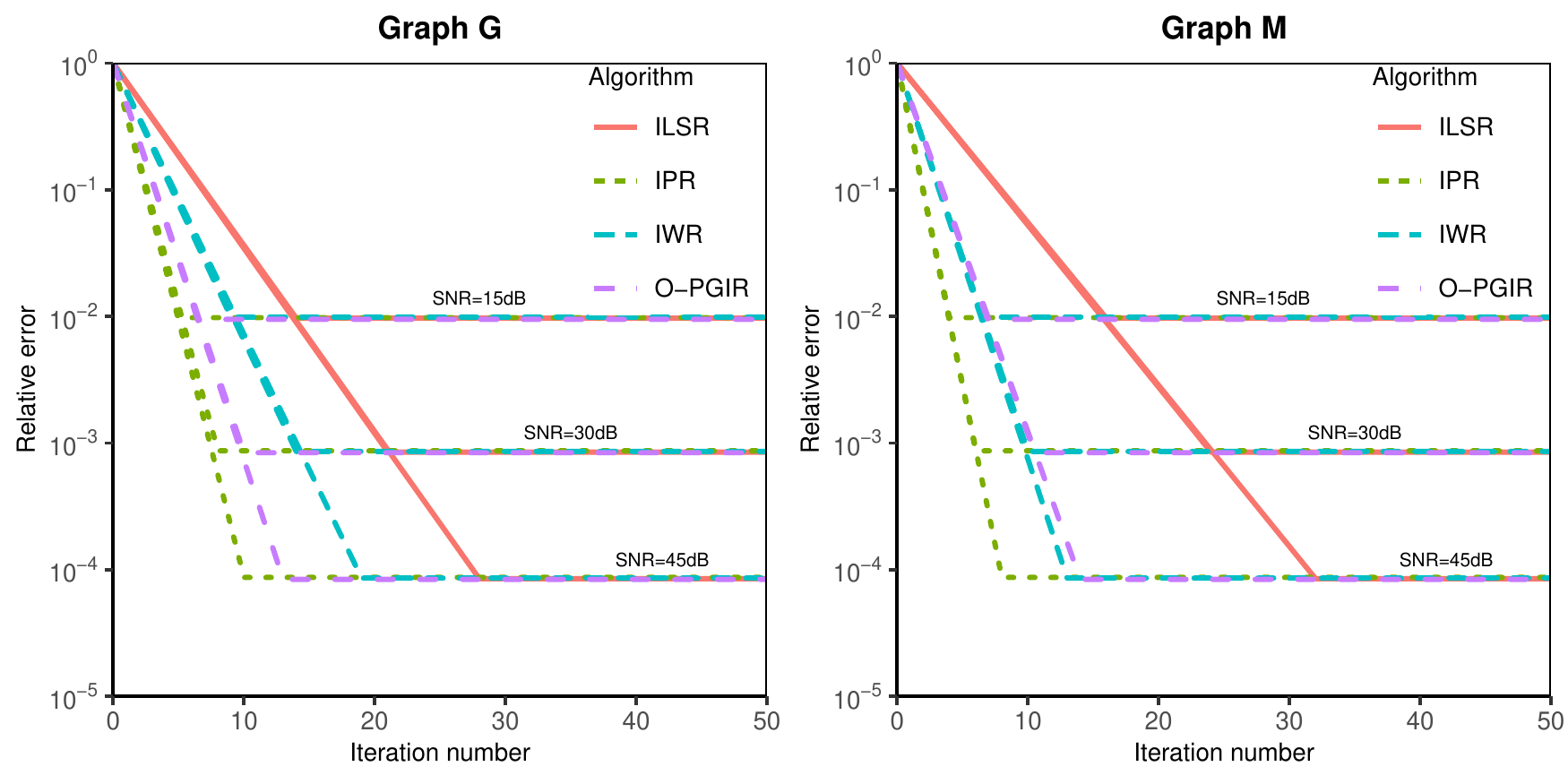}}
\caption{Left: graph $\mathcal{G}$. Right: graph $\mathcal{M}$. Robustness of the algorithms against noise with different SNR.}\label{fi:noise}
\end{figure}

\section{Conclusions}\label{se:con}
In this paper, the task of bandlimited signal reconstruction on graph from only a partial set of samples is investigated. An iterative method, called O-PGIR, is proposed to reconstruct the missing data from the observed samples. Similar to existing graph signal reconstruction algorithms, namely ILSR, IWR and IPR, the proposed method is derived from the Papoulis-Gerchberg iterative scheme, but with the optimal value of the relaxation parameter involved in the iteration step. Compared with the aforementioned algorithms, our method achieves similar or better performance both in terms of convergence rate and execution time.

\clearpage
\section*{References}
\def\refname{}

\end{document}